\numberwithin{equation}{section}
\newcounter{obr}[section]
\newcounter{pvv}[section]
\renewcommand{\thepvv}{\thesection.\arabic{pvv}}
\newenvironment{pv}[2][]{\begin{trivlist}\refstepcounter{pvv}%
\item[\hspace{\labelsep}\normalfont\bfseries\thepvv. #2%
  \def\tmp{#1}\ifx\tmp\empty\else{} (#1)\fi.]}%
{\end{trivlist}}
\newenvironment{theo}[1][]{\begin{pv}[#1]{Theorem}\begin{itshape}}{\end{itshape}\end{pv}}
\newenvironment{pr}{\begin{pv}{Proposition}\begin{itshape}}{\end{itshape}\end{pv}}
\newenvironment{lem}{\begin{pv}{Lemma}\begin{itshape}}{\end{itshape}\end{pv}}
\newcommand{\cc}{\ensuremath{\mathbb C}}
\newcommand{\8}{\ensuremath{\infty}}
\newcommand{\f}{\ensuremath{\varphi}}
\newcommand{\de}{\ensuremath{\delta}}
\newcommand{\psai}{\ensuremath{\psi}}
\newcommand{\si}{\ensuremath{\sigma}}
\newcommand{\ifff}{if, and only if,}
\newcommand{\vNa}{von Neumann algebra}
\newcommand{\Ca}{$C^\ast$-algebra}
\newcommand{\HS}{Hilbert space}
\newcommand{\st}{such that}
\newcommand{\set}[2]{\ensuremath{\{ #1\,|\; #2\}}}
\newcommand{\ac}{absolutely continuous with respect to}
\newcommand{\uac}{uniformly absolutely continuous with respect to}
\newcommand{\VHS}{Vitali-Hahn-Saks Theorem}
\begin{document}
\title{ Noncommutative Vitali-Hahn-Saks Theorem holds precisely for finite $W^\ast$-algebras.
\footnote{2000 MSC: 46L10, 46L30}
}

\author{E.Chetcuti and J.Hamhalter}

\begin{abstract}It is shown that the bona fide
generalization of the Vitali-Hahn-Saks Theorem to von Neumann
algebras is possible if, and only if, the algebra is finite.  This
settles the problem on the noncommutative Vitali-Hahn-Saks Theorem completely and
provides new means of characterizing finite von Neumann algebras.
\end{abstract}
\maketitle


\section{Introduction and Preliminaries}

The Vitali-Hahn-Saks Theorem is one of the fundamental results of measure theory.  Let $K$ be a set of (scalar-valued) measures on a $\sigma$-field
$\mathcal A$ of measurable sets.  Suppose that $K$ is  relatively
compact in the topology of pointwise convergence on the elements of
$\mathcal A$ and that every $\f\in K$ is \ac{} some fixed positive
measure $\psi$ on $\mathcal A$. In its classical form, the \VHS{}
asserts that $K$ is \uac{} $\psi$. 

The generalization of the \VHS{} to \vNa s has recently received a
great deal of attention. Deep classical results of Aarnes and
Akemann \cite{Aarnes,Akemann} have been considerably extended to \Ca
s and \vNa s by Brooks, Sait\^o and Wright in a series of remarkable
papers \cite{B+W,BrWrS,B+S+W}. However, in order to obtain uniform
absolute continuity, in these papers it is assumed that $K$ is
pointwise {\em strongly} absolutely continuous with respect to
$\psi$. Let us recall that a normal functional \f{} on a \vNa{} $M$
is \ac{} a normal positive functional  $\psi$ on $M$ if $\f(p)=0$
whenever $p$ is a projection in $M$ with $\psi(p)=0$. On the other
hand, \f{} is said to be strongly \ac{} $\psi$ if both the absolute
value $|\f_h|$ of the hermitian part of \f{} and the absolute value
$|\f_{ah}|$ of the antihermitian part of $\f$ are \ac{} \psai.
Strong absolute continuity is much more stringent than absolute
continuity; particularly if we consider vector-valued measures. The
two notions coincide if the algebra is abelian or when all measures
concerned are positive.
 Therefore a natural question has erupted on whether
  the \VHS{} holds without assuming strong absolute continuity in the hypothesis. In
  \cite{ChetHam} we showed  the 'genuine' form of the \VHS{} can
be obtained provided the control measure is faithful. In this paper
we settle completely the status of the \VHS{} for \vNa s. We show
that the direct generalization is possible for vector-valued
measures on finite \vNa s. On the other hand, we prove that if the
algebra is infinite,  the \VHS{} fails, even if
we restrict to scalar-valued measures. This new measure-theoretic
characterization of finite \vNa s  complements the wide range of
hitherto known topological, functional-analytic, and
lattice-theoretic characterizations of finiteness in the Murray-von
Neumann comparison theory (see e.g. \cite{Hamhalter,Saito,
Takesaki}). Since for positive measures the \VHS{} holds
irrespectively of the von Neumann algebra (see e.g. \cite[Theorem 4.6]{ChetHam}), the results obtained in
this paper exhibit the delicate interplay that exists between the
measure and its absolute value in the noncommutative situation. Pursuing
this matter further, we show that for each infinite \vNa{} there is
a weakly relatively compact subset $K$ in the predual \st{} the
\VHS{} holds for $K$, but not for the set $|K|$ (where $|K|$ denotes
the set of absolute values of the functionals of $K$). This extends
a classical result of Sait\^{o} \cite{Saito}, who proved that a
\vNa{} $M$ is finite \ifff{} the following condition holds: A subset
$K$ of the predual of $M$ is weakly relatively compact exactly when
$|K|$ is weakly relatively compact. In this connection we  have
obtained further descriptions of finite \vNa s.\par

Let us recall basic facts and fix the notation. Our standard
reference for operator algebras is \cite{Takesaki}. For a normed
space $F$ we shall  use the symbol $F_1$ for its closed unit ball.
The symbol $B(\mathcal H)$ will be reserved for the algebra of all
bounded operators acting on a \HS{} $\mathcal H$. Throughout the
paper, $M$ will stand for a \vNa. The symbols $M_\ast$ and
$M_\ast^+$ shall denote the predual and the positive part of the
predual, respectively. If $M$ acts on a \HS{} $\mathcal H$ and
$\eta,\xi\in \mathcal H$, we shall denote by $\omega_{\eta,\xi}$ the
linear functional $\omega_{\eta,\xi}(x)=(x\eta, \xi)$ ($x\in M$). We
also set $\omega_\eta=\omega_{\eta, \eta}$. If $\f\in M_\ast$ we
shall denote by $|\f|$ the absolute value of the functional \f{}¨
(see e.g. \cite{Takesaki} for more details). Let $P(M)$ denote the
projection lattice of $M$. Two projections $p,q\in P(M)$ are called
orthogonal if $pq=0$.

We denote by $\sigma(M,M_\ast)$ the weak$^\ast$-topology on $M$,
i.e. the weakest topology compatible with the duality $\langle
M_\ast,M\rangle$.  The strongest topology on $M$ compatible with
this duality (the Mackey topology) is denoted by $\tau(M,M_\ast)$.
We recall that the $\tau(M,M_\ast)$ topology on $M$ coincides with
the topology of uniform convergence on weakly relatively compact
subsets of $M_\ast$.  Lying between these topologies we have the
$\si$-strong topology  $s(M,M_\ast)$ determined by the family of seminorms
$\set{\rho_{\omega}}{\omega\in M_\ast^+}$ where
$\rho_{\omega}(x)=\omega(x^\ast x)^{1/2}$; and the $\si$-strong$^\ast$
topology $s^\ast(M,M_\ast)$ determined by the family of the
seminorms $\set{\rho_\omega,\rho^\ast_\omega}{\omega\in M_\ast^+}$,
where $\rho^\ast_{\omega}(x)=\omega( x x^\ast)^{1/2}$. Let us recall
that on bounded parts of $M$ the $\si$-strong$^\ast$ topology coincides
with the Mackey topology.

Let $X$ be a locally convex vector space. A {\em completely
additive} measure $\mu:P(M)\to X$   is a map satisfying
$\mu\biggl(\sum_{p\in \Gamma} p\biggr) =\sum_{p\in \Gamma}\mu(p)\,,$
whenever $\Gamma$ is a set of pairwise orthogonal projections. By
the symbol $B_{ca}(M,X)$ we shall denote the set of all bounded
linear maps of $M$ into $X$ which restrict to completely additive
measures on $P(M)$. (If $X=\cc$, then $B_{ca}(M,X)$ reduces to the
predual of $M$.) By the vector Gleason theorem \cite{BunceWright}
there is a one-to-one correspondence between bounded completely
additive measures on $P(M)$  and operators in $B_{ca}(M,X)$ (see
\cite{ChetHam,QMT} for more details).

A subset $K\subset B_{ca}(M,X)$ is said to be \emph{pointwise
absolutely continuous} with respect to $\psi\in M_\ast^+$ (in
symbols $K\ll_p\psi$) if for every $T\in K$ and neighbourhood $U$ of
$0\in X$, there is a $\delta>0$ such that $Tp\in U$ whenever $p\in
P(M)$ and $\psi(p)<\delta$.  For every $T\in B_{ca}(M,X)$ we write
$T\ll \psi$ if $\{T\}\ll_p \psi$.  A subset $K\subset B_{ca}(M,X)$
is said to be \emph{uniformly absolutely continuous} with respect to
$\psi\in M_\ast^+$ (in symbols $K\ll_u\psi$) if for every
neighbourhood $U$ of $0\in X$, there is a $\delta>0$ such that
$Tp\in U$ for every $T\in K$ whenever $p\in P(M)$ and
$\psi(p)<\delta$. (For more details on the interrelations between
various concepts of absolute continuity see \cite{ChetHam}.)

Given a subset $K\subset M_\ast$ we define $K_p=\set{\psi\in
M_\ast^+}{K\ll_p \psi}$ and $K_u=\set{\psi\in M_\ast^+}{K\ll_u
\psi}$.  Of course, $K_u\subset K_p$. A subset $K\subset M_\ast$ is
said to have the {\em Vitali-Hahn-Saks property} (VHS-property in short) if
$K_p\not=\emptyset$ and $K_p=K_u$. A  deep result of Akemann states
that $K_u$ is nonempty if $K\subset M_\ast$ is weakly relatively
compact \cite{Akemann}. If $K\subset M_\ast$ is bounded and enjoys
the Vitali-Hahn-Saks property, then it has a control measure and
therefore $K$  is weakly relatively compact. However, it will follow
from our discussion that in the predual of any infinite algebra
there are weakly relatively compact subsets that do not have the
Vitali-Hahn-Saks property.

\section{Results}

Before giving the proof of the \VHS{} for finite algebras we recall
that the following three conditions are equivalent: (i)~$M$ is
finite; (ii)~The $\ast$-operation is \si-strongly continuous;
(iii)~On bounded parts of $M$ the ($\si$-) strong topology  agrees with the
Mackey topology \cite[p. 333, Exercise 5]{Takesaki}.

\begin{theo}\label{finite}
Let $M$ be a finite \vNa{} and $K\subset B_{ca}(M,X)$ a relatively
compact set in the topology of pointwise convergence on elements of
$M$. Let $\psi\in M_\ast^+$. If $K$ is pointwise absolutely continuous with respect to $\psi$, then $K$
is uniformly absolutely continuous with respect to $\psi$.
\end{theo}
\begin{proof}
First we shall prove this theorem for the scalar case, i.e. when
$X=\cc$. In this case $K$ is a weakly relatively compact subset of
$M_\ast$. Let $(e_k)$ be a sequence of projections \st{}
$\psi(e_k)\to 0$.  Denote by $s(\psi)$  the support projection of
$\psi$ and let $N=s(\psi)M s(\psi)$.  Since $\psi$ is faithful on
$N$, $s(\psi) e_k s(\psi)\to 0$ in the $s(N,N_\ast)$ topology (and
therefore in the $\sigma(N,N_\ast)$ topology) \cite[p. 148, Prop.
5.3]{Takesaki}. On $N$ the $\sigma(N,N_\ast)$ topology coincides
with the relativized $\sigma(M,M_\ast)$ topology.  Thus it follows
that $e_k s(\psi)\to 0$ in the $s(M_\ast,M)$ topology.  Since $M$ is
finite $s(\psi)e_k=(e_ks(\psi))^\ast\to 0$ in the $s(M_\ast,M)$
topology.  This implies that
\[
s(\psi)e_ks(\psi),\,(1-s(\psi))e_ks(\psi),\,s(\psi)e_k(1-s(\psi))\to
0
\]
in the $s(M,M_\ast)$ topology - and therefore in the $\tau(M,M_\ast)$ topology.
  Moreover, since every $\f\in K$ is absolutely
continuous with respect to $\psi$ we have  $\f((1-s(\psi))\, e_k\,
(1-s(\psi))=0 $ for every $k$ \cite[Proposition 2.2]{ChetHam}.
Combining we get that  as $k\to\infty$

\begin{align*}
\sup_{\f\in K}|\f(e_k)| \le& \sup_{\f\in K} |\f(s(\psi)\, e_k\,
s(\psi))|
+ \sup_{\f\in K} |\f((1-s(\psi))\, e_k\, s(\psi))| \\
+ &\sup_{\f\in K} |\f(s(\psi)\, e_k\, (1-s(\psi)))|\to 0\,.
\end{align*}

Let us return to the general vector case. Let $U$ be a convex,
closed, circled neighbourhood of zero in $X$ with the polar
$U^0=\set{f\in X^\ast}{|f(x)|\le 1}$. Then the set
\[ K_U=\set{f\circ T}{f\in U^0,\, T\in K}\]
is a weakly relatively compact subset of $M_\ast$ by \cite[Theorem
3.3]{ChetHam}. By the previous part of the proof $K_U\ll_u\psi$.
In other words, there is a $\de>0$ \st{} $
|f(T(p))|\le 1$ for all $f\in U^0$ and $T\in K$, whenever $p\in
P(M)$ and $\psi(p)<\de$. Therefore, if $\psi(p)<\de$, then $T(p)\in
U$ for all $T\in K$ by the Bipolar Theorem.
\end{proof}

\begin{pr}\label{proposition}
Let $M$ be an infinite \vNa{}. Then there is a normal state $\psi$
on $M$ and a sequence $(\f_k)$ of normal functionals on $M$ \st{}
each $\f_k$ is \ac{} $\psi$,  $(\f_k)$ converges to zero pointwise
on $M$, but
 $(\f_k)$ is not \uac{} $\psi$.
\end{pr}

\begin{proof} Suppose that $M$ acts on a \HS{} $H$.
Since $M$ has nonzero properly infinite part, there is a sequence
$(e_n)$ of mutually orthogonal nonzero projections in $M$ which are pairwise
equivalent. Therefore, there is a sequence $(u_k)$ of partial
isometries in $M$ \st{} $ u_k^\ast\, u_k= e_1$, $ u_k\,u_k^\ast =
e_k$ for each $k\ge 2$. Let us choose a unit vector $\xi_1\in
e_1(H)$ and put $\xi_k=u_k\, \xi_1\,,\quad k\ge 2\,.$ The sequence
$(\xi_k)$ is orthonormal. Let us further define
\[ \psi=\sum_{n=2}^\8 \frac 1{2^n} \omega_{\xi_n}\quad\text{and}\quad
\f_k=\omega_{\xi_{k}, \xi_{1}}\,\,(k\ge 2). \]
 It is clear that $\f_k\to 0$ weakly.
Let $p\in P(M)$ with $\psi(p)=0$. Then, for each $k\ge 2$,
$p\xi_k=0,$ and so $ \f_{k}(p)=(p\xi_{k},\xi_{1})=0 \,.$ Whence,
the sequence $(\f_k)_{k\ge 2}$ is pointwise \ac{} \psai.
For each $k\ge 2,$ define $ h_k=\frac 12(e_1+e_k+u_k+u^\ast_k)\,.$
Observe that $h_k$ is a projection lying underneath $e_1+e_k$. We
can compute
\begin{eqnarray*}
\psi(h_k)&=&\sum_{n=2}^\8 \frac 1{2^n}\, \omega_{\xi_n}(h_k)=
\sum_{n=2}^\8 \frac 1{2^n}\, (h_k\, \xi_n,\xi_n)=\\
&=&\frac 1{2^k}(h_k\xi_k, \xi_k)\,.
\end{eqnarray*}
Thus $ \psi(h_k)\le \frac 1{2^k}\to 0 \mbox{ as } k\to\8\,.$ On the
other hand $ \f_{k}(h_k)=(h_k\xi_k,\xi_1)= \frac 12\,,$ whenever
$k\ge 2$. Whence, for each fixed $n$
\[ \sup_k |\f_k(h_n)|\ge \frac 12\,, \]
 and so the sequence $(\f_k)$ is not \uac{} $\psi$.
\end{proof}

\begin{theo}\label{characterization1}
The following conditions are equivalent
\begin{enumerate}
\item $M$ is finite
\item Let $K\subset M_\ast$ be a weakly relatively compact set and $\psi\in M_\ast^+$.
If $K\ll_p \psi$, then $K\ll_u \psi$
\end{enumerate}
\end{theo}

 Sait\^o showed  that if $M$ is a finite von Neumann
algebra and $K\subset M_{\ast}$ is weakly relatively compact, then
$|K|$ is also weakly relatively compact \cite[Theorem 1]{Saito}.
Moreover, he proved that this property of the predual characterizes
finite algebras completely.  (Of course, if $|K|$ is weakly
relatively compact, then $K$ is also weakly relatively compact
irrespective of whether the algebra is finite or not.)
Theorem~\ref{finite} and \cite[Theorem III.9]{Akemann} (see also
\cite[p.149,Theorem 5.4]{Takesaki}) tells us that for finite
algebras, weakly relatively compact sets in the predual coincide
with the bounded sets enjoying the Vitali-Hahn-Saks property.
Combining with the result of Sait\^o we conclude that for finite
algebras:
\[
K\mbox{ has the VHS-property if, and only if, }|K| \mbox{ has the
VHS-property},
\]
for every  bounded subset $K$ of the predual.  In Theorem
\ref{characterization2} below we show that for $\sigma$-finite algebras
both implications imply finiteness of the algebra.  Let us first
prove the following lemmas.

\begin{lem}\label{lemma1}
Let $M$ be a von Neumann algebra acting on a Hilbert space $\mathcal
H$ and let $(\eta_i)\subset \mathcal H$ be a $($norm$)$ convergent
sequence of vectors.  Then, the set
\[
\set{\omega_{\eta_i\,,\xi}}{i\in\mathbb N,\,\xi\in \mathcal
H_1}\subset M_\ast\] is weakly relatively  compact.
\end{lem}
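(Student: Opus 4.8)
The plan is to realise the set $A:=\set{\omega_{\eta_i,\xi}}{i\in\nn,\ \xi\in\mathcal H_1}$ as a uniform limit of weakly compact sets and then apply Grothendieck's approximation criterion for relative weak compactness. Write $\eta=\lim_i\eta_i$ (this exists by hypothesis); note first that $A$ is norm bounded, since $\|\omega_{\eta_i,\xi}\|\le\|\eta_i\|\|\xi\|\le\sup_j\|\eta_j\|<\8$ because a convergent sequence is bounded.

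The first step is the single-vector case: for a fixed $\zeta\in\mathcal H$ the set $C_\zeta:=\set{\omega_{\zeta,\xi}}{\xi\in\mathcal H_1}$ is weakly compact in $M_\ast$. Indeed, $\xi\mapsto\omega_{\zeta,\xi}$ is a bounded conjugate-linear map of $\mathcal H$ into $M_\ast$ that carries the weak topology of $\mathcal H$ into the weak topology $\sigma(M_\ast,M)$ of $M_\ast$: if $\xi_\alpha\to\xi$ weakly in $\mathcal H$, then $\omega_{\zeta,\xi_\alpha}(x)=(x\zeta,\xi_\alpha)\to(x\zeta,\xi)=\omega_{\zeta,\xi}(x)$ for every $x\in M$. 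Hence $C_\zeta$, being the image under this map of the weakly compact ball $\mathcal H_1$, is weakly compact.

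The second step passes to the whole sequence. For every $i$ and every $\xi\in\mathcal H_1$ one has $\|\omega_{\eta_i,\xi}-\omega_{\eta,\xi}\|=\|\omega_{\eta_i-\eta,\xi}\|\le\|\eta_i-\eta\|$, so $C_{\eta_i}$ is contained in the $\|\eta_i-\eta\|$-neighbourhood of $C_\eta$. Given $\e>0$, pick $N$ with $\|\eta_i-\eta\|<\e$ for all $i>N$; then
\[ A\ \subseteq\ \bigl(C_{\eta_1}\cup\dots\cup C_{\eta_N}\cup C_\eta\bigr)+\e\,(M_\ast)_1, \]
and the bracketed set, a finite union of weakly compact sets, is weakly compact. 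Grothendieck's lemma — a bounded subset of a Banach space that, for every $\e>0$, lies within $\e$ of some weakly compact set is relatively weakly compact — now yields that $A$ is relatively weakly compact.

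I expect the only genuinely substantive point to be the transition from the one-vector statement to the sequence, i.e. the interplay of the norm estimate with the convergence of $(\eta_i)$; the continuity verification in the first step and the estimate in the second are routine. If one prefers to avoid Grothendieck's lemma, one can instead invoke Akemann's characterisation \cite{Akemann} of relative weak compactness in the predual: for any orthogonal sequence $(p_n)$ of projections in $M$ one computes $\sup_{i,\xi}|\omega_{\eta_i,\xi}(p_n)|=\sup_i\|p_n\eta_i\|$, and this tends to $0$ because $\sum_n\|p_n\zeta\|^2\le\|\zeta\|^2$ forces $\|p_n\zeta\|\to 0$ for each fixed $\zeta$, while $\|p_n\eta_i\|\le\|p_n\eta\|+\|\eta_i-\eta\|$ upgrades this to uniformity in $i$ once finitely many terms are separated off.
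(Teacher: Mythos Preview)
Your argument is correct. Both the Grothendieck-approximation route and the alternative via Akemann's criterion go through as written; the norm estimate $\|\omega_{\eta_i,\xi}-\omega_{\eta,\xi}\|\le\|\eta_i-\eta\|$ is exactly what makes the finite-union-plus-$\e$-ball inclusion work, and your verification that each $C_\zeta$ is weakly compact is the right observation that $(M_\ast)^*=M$, so $\sigma(M_\ast,M)$ \emph{is} the weak topology.

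The paper takes a slightly different and more compressed path: rather than fixing one variable and then approximating in the other, it argues joint continuity in one stroke. Equip one copy of $\mathcal H_1$ with the norm topology and the other with the weak topology; then $(\eta,\xi)\mapsto\omega_{\eta,\xi}$ is continuous from the product into $(M_\ast,\sigma(M_\ast,M))$, and the set $\{\eta_i:i\in\nn\}\times\mathcal H_1$ is relatively compact in that product (convergent sequence $\times$ weakly compact ball). This yields relative weak compactness of the image directly, with no appeal to Grothendieck's lemma or Akemann's criterion. Your approach, by contrast, is more modular and makes explicit which hypothesis drives which step (weak compactness of the ball handles $\xi$; norm convergence of $(\eta_i)$ handles the approximation), and the Akemann alternative gives a self-contained predual computation. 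Either could serve in place of the paper's proof.
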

\begin{proof}Let $\mathcal H^1$ and $\mathcal H^2$ be two copies of $\mathcal H$.
Equip $\mathcal H^1_1$ with the norm topology and $\mathcal H^2_1$
with the weak topology.
 Consider the Cartesian
product $\mathcal H^1_1\times\mathcal H^2_1$.  One can easily check
that the function $\Gamma:(\eta,\xi)\in \mathcal H^1_1\times\mathcal
H^2_1\mapsto \omega_{\eta,\,\xi}\in M_\ast$ is continuous (where
$M_\ast$ is equipped with the weak topology). Therefore, $\Gamma$
maps relatively compact subsets of $\mathcal H^1_1\times\mathcal
H^2_1$ into weakly relatively compact subsets of $M_\ast$. But,
observe that the set
\[
\set{(\eta_i,\xi)}{i\in\mathbb N,\, \xi\in \mathcal H^2_1}\subset
\mathcal H^1_1\times\mathcal H^2_1\] is relatively compact.
\end{proof}

\begin{lem}\label{lemma2}Let $M$ be a von Neumann algebra and let $A$ be a
von Neumann subalgebra
of $M$ sharing the same unit $\mathbf{1}$ of $M$. For any
$\varphi\in M_\ast$, denote by $\widehat{\varphi}$ the restriction
of $\varphi$ to $A$. If $\Vert
\varphi\Vert=\Vert\widehat{\varphi}\Vert$ for some $\varphi\in
M_\ast$, then $\widehat{|\varphi|}=|\widehat{\varphi}|.$
\end{lem}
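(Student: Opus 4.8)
The plan is to compare the polar decomposition of $\widehat{\varphi}$ carried out inside $A$ with the polar decomposition of $\varphi$ carried out inside $M$, the norm hypothesis being precisely what forces these two decompositions to be compatible. First I would write $\widehat{\varphi}=w\,|\widehat{\varphi}|$ in $A_\ast$, where $w\in A$ is a partial isometry with $w^{\ast}w=s(|\widehat{\varphi}|)$; this gives $|\widehat{\varphi}|(x)=\widehat{\varphi}(xw^{\ast})=\varphi(xw^{\ast})$ for every $x\in A$ (the last equality because $w^{\ast}\in A$, so $\widehat{\varphi}$ and $\varphi$ agree at $xw^{\ast}$). Putting $x=\mathbf{1}$ yields $\varphi(w^{\ast})=|\widehat{\varphi}|(\mathbf{1})=\|\widehat{\varphi}\|$, which by hypothesis equals $\|\varphi\|=\||\varphi|\|=|\varphi|(\mathbf{1})$. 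Thus $\varphi$ attains its norm at the contraction $w^{\ast}$.

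Next I would invoke the polar decomposition $\varphi=u\,|\varphi|$ in $M_\ast$, with $u^{\ast}u=e:=s(|\varphi|)$ and $\varphi(x)=|\varphi|(xu)$, so that $|\varphi|(w^{\ast}u)=\varphi(w^{\ast})=|\varphi|(\mathbf{1})$; that is, the positive normal functional $|\varphi|$ attains its norm at the contraction $c:=w^{\ast}u$. A routine Cauchy--Schwarz computation (using that $|\varphi|(c)=|\varphi|(c^{\ast})=|\varphi|(\mathbf{1})$ is real and that $c^{\ast}c\le\mathbf{1}$) then forces $|\varphi|\big((\mathbf{1}-c)^{\ast}(\mathbf{1}-c)\big)=0$, and since the left kernel of a positive normal functional $\rho$ consists of the $y$ with $y\,s(\rho)=0$, this gives $(\mathbf{1}-c)\,e=0$, i.e. $w^{\ast}u\,e=e$.

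I would then conclude by the chain, valid for every $x\in A$,
\[
|\widehat{\varphi}|(x)=\varphi(xw^{\ast})=|\varphi|(xw^{\ast}u)=|\varphi|\big(xw^{\ast}u\,e\big)=|\varphi|(xe)=|\varphi|(x)=\widehat{|\varphi|}(x),
\]
where I use twice the identity $|\varphi|(z)=|\varphi|(ze)$, valid for all $z\in M$ because $e=s(|\varphi|)$ (so $|\varphi|(\mathbf{1}-e)=0$), together with $w^{\ast}u\,e=e$ from the previous step. Hence $|\widehat{\varphi}|$ and $\widehat{|\varphi|}$ coincide on $A$.

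I expect the essential point to be the second step: the hypothesis $\|\widehat{\varphi}\|=\|\varphi\|$ is consumed exactly in upgrading ``$\varphi$ attains its norm at $w^{\ast}$'' to ``$|\varphi|$ attains its norm at $w^{\ast}u$'', and without it the identity $w^{\ast}u\,e=e$, and with it the whole conclusion, fails --- restriction does not commute with passing to the absolute value in general. The remaining delicacy is purely bookkeeping: keeping straight the two polar decompositions living in different algebras and the several support projections involved. The case $\widehat{\varphi}=0$ (equivalently $\varphi=0$, by the hypothesis) is immediate and can be dispatched at the outset, and I would note that the shared-unit assumption enters precisely through the identifications $\|\widehat{\varphi}\|=|\widehat{\varphi}|(\mathbf{1})$ and $\|\varphi\|=|\varphi|(\mathbf{1})$.
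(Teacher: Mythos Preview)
Your argument is correct, but it proceeds along a genuinely different line from the paper's. The paper does not compare the two polar decompositions at all; instead it verifies directly that $\widehat{|\varphi|}$ satisfies the two conditions that uniquely characterize the absolute value of $\widehat{\varphi}$ in $A_\ast$, namely $\|\widehat{|\varphi|}\|=\|\widehat{\varphi}\|$ (immediate from the shared unit and the norm hypothesis) and $|\widehat{\varphi}(x)|^2\le\|\widehat{\varphi}\|\,\widehat{|\varphi|}(xx^\ast)$ for all $x\in A$ (inherited by restriction from the corresponding inequality on $M$), and then invokes the uniqueness statement \cite[p.~143, Proposition~4.6]{Takesaki}. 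Your approach is longer but more self-contained: you show that the norm hypothesis forces the partial isometries $w\in A$ and $u\in M$ from the two polar decompositions to satisfy $w^\ast u\,s(|\varphi|)=s(|\varphi|)$, and then read off the equality of functionals from this operator identity. The paper's route is quicker if one is willing to quote the characterization of $|\cdot|$ as a black box; yours makes the underlying mechanism explicit, and in particular yields the compatibility of the partial isometries themselves, which could be of independent use.
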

\begin{proof}We observe that
\[
\Vert\,\widehat{|\varphi|}\,\Vert=\widehat{|\varphi|}(\mathbf{1})=
|\varphi|(\mathbf{1})=\Vert\,|\varphi|\,\Vert=\Vert\varphi\Vert=
\Vert\widehat{\varphi}\Vert.\] Moreover, \[
|\varphi(x)|^2\le\Vert\varphi\Vert\,|\varphi|(x\,x ^\ast) \quad
\mbox{ for all } x\in M \,,\] which implies that
$|\widehat{\varphi}(x)|^2\le\Vert\widehat{\varphi}\Vert\,\widehat{|\varphi|}(x
x^\ast)$ for all $x\in A$.  Thus result follows from \cite[p. 143,
Proposition 4.6]{Takesaki}.

\end{proof}

\begin{theo} \label{characterization2}
Let $M$ be a $\sigma$-finite von Neumann algebra.  The
following statements are equivalent:
\begin{enumerate}
\item $M$ is finite;

\item $|K|$ has the Vitali-Hahn-Saks property implies that $K$ has the
Vitali-Hahn-Saks  property, for all bounded $K\subset {M_\ast}$.

\item $K$ has the Vitali-Hahn-Saks property implies that $|K|$ has the
 Vitali-Hahn-Saks  property, for all bounded $K\subset {M_\ast}$;
\end{enumerate}
\end{theo}
\begin{proof}\
We know that  if $M$ is finite, then weakly relative compact sets of
the predual coincide with the bounded sets enjoying the
Vitali-Hahan-Saks property. Therefore (i) $\Longrightarrow$ (ii) and
(i) $\Longrightarrow$  (iii) by \cite[Theorem 1]{Saito}.

Suppose now that $M$ is not finite. Consider the sequence
$\f_k=\omega_{\xi_{k}\,,\xi_1}$, $k\ge 2$,  constructed in the proof of
Proposition~\ref{proposition}. It was shown that the sequence
$K=(\f_k)_{k\ge 2}$ does not have the Vitali-Hahn-Saks property. On the other
hand, $|K|=\{\omega_{\xi_1}\}$ and therefore it has the
Vitali-Hahn-Saks property. Whence, (ii) $\Longrightarrow$ (i).

It remains to show that (iii) $\Longrightarrow$ (i). For this, we
shall suppose that $M$ is not finite and  construct a sequence
$K\subset M_\ast$ \st{} $K$ has the Vitali-Hahn-Saks property while
$|K|$ has not the Vitali-Hahn-Saks property. Since $M$ has a nonzero
properly infinite direct summand, we can assume that $M$ is properly
infinite, and thereby isomorphic to the algebra $B(\mathcal
H)\overline{\otimes} N$ where $\mathcal H$ is a separable
infinite-dimensional Hilbert space, and $N$ is a $\sigma$-finite von
Neumann algebra.   $N$ admits a faithful normal representation
$\{\pi,\mathcal K\}$ with a separating and cyclic (unit) vector
$\eta\in\mathcal K$.  Therefore, upon replacing $M$ by
$(I\overline{\otimes}\pi)M$, we can assume that $M$ acts on
$\mathcal H\otimes \mathcal K$.

Let $(\eta_i)$ be a total and convergent sequence of unit vectors in
$\mathcal H$. For each $i\in\mathbb N$, let
$\bar\eta_i=\eta_i\otimes\eta$.  We show that the sequence
$(\bar\eta_i)$ is separating for $M$.  For this, it is enough to
show that $(\bar\eta_i)$ is cyclic for $M'$. This follows by the
commutation theorem for tensor products after observing that
\[
[M'\{\bar\eta_i\}_i]=[\mathbb{C}\overline{\otimes}
N'\{\eta_i\otimes\eta\}_i]= \mathcal H\otimes\mathcal K.
\]
Consequently, the set in the predual
\[
K=\set{\omega_{\bar\eta_i,\bar\xi}}{i\in\mathbb
N,\,\bar\xi\in(\mathcal H \otimes\mathcal K)_1}\] is separating for
$M$.  In view of Lemma~\ref{lemma1}, $K$ is also weakly relatively
compact. Since $M$ is $\sigma$-finite, $M$ admits a faithful normal
state. Thus $K_p\ne \emptyset$.  Moreover, since $K$ is separating
for $M$, every $\psi\in K_p$ is necessarily faithful. By
\cite[Theorem 4.1]{ChetHam} it follows that $K_p=K_u$, i.e. $K$
enjoys the Vitali-Hahn-Saks  property.

We show that $|K|$ does not have the VHS-property. Let $(\xi_j)$ be
an orthonormal basis of $\mathcal H$ and let $\bar\xi_j=\xi_j\otimes
\eta$.  We show that the set
\[
\set{|\omega_{\bar\eta_i,\,\bar\xi_j}|}{i,\,j\in\mathbb N}\] is not
weakly relatively compact.  If we identify $B(\mathcal H)$ with
$B(\mathcal H)\otimes \mathbf{1}_N$, we can assume that $B(\mathcal
H)$ is a von Neumann subalgebra of $M$, sharing the same unit of
$M$.  The restriction to $B(\mathcal H)$ of
$\omega_{\bar\eta_i,\,\bar\xi_j}$ is $\omega_{\eta_i,\,\xi_j}$.
Since $1=\Vert
\omega_{\eta_i,\,\xi_j}\Vert=\Vert\omega_{\bar\eta_i,\,\bar\xi_j}\Vert$,
by Lemma~\ref{lemma2}, it follows that the restriction to
$B(\mathcal H)$ of $|\omega_{\bar\eta_i,\,\bar\xi_j}|$ is
$|\omega_{\eta_i,\,\xi_j}|$.  But observe that
$|\omega_{\eta_i,\,\xi_j}|=\omega_{\xi_j}$. However, the set
$\set{\omega_{\xi_j}}{j\in\mathbb N}$ is not weakly relatively
compact.  This means that $|K|$ cannot have the Vitali-Hahn-Saks property.
\end{proof}

{\bf Acknowledgment.} The work of Jan Hamhalter was supported by the
research plans of the Ministry of Education of the Czech Republic
No. 6840770010. Jan Hamhalter also thanks to the project of the Grant Agency of
the Czech Republic no. 201/07/1051 "Algebraic and Measure-Theoretic
Aspects of Quantum Structures" and to the Alexander von Humboldt
Foundation, Bonn.

\vspace{1cm}

E.Chetcuti, Department of Mathematics, Junior College, University of Malta, Msida MSD 06, Malta\\

J.Hamhalter, Czech Technical University, Faculty of Electrical Engineering, Department of Mathematics,
Technicka 2, 166 27 Prague 6, Czech Republic. e-mail: hamhalte@math.feld.cvut.cz\\

\end{document}